\newtheorem{defin}{Definition}
\newtheorem{prop}{Proposition}
\newcommand{\p}[1]{\left(#1\right)}
\newcommand{\st}[1]{\left\{#1\right\}}
\newcommand{\bk}[1]{\left[#1\right]}
\newcommand{\ceil}[1]{\left\lceil#1\right\rceil}
\newcommand{\quot}[1]{``#1''}
\newcommand{\limf}[3]{\lim_{#1\rightarrow#2}{#3}}
\newcommand{\limi}[2]{\limf{#1}{\infty}{#2}}
\newcommand{\lb}{\hspace*{\fill}}
\DeclareMathOperator{\modd}{mod}
\newcommand{\namehead}[3]{
\lstset{breaklines=true, morecomment=[l]{//}, frame=single, showstringspaces=false, numbers=left}
\begin{flushright}
Nathan Fox\\
#2\\
#3\\
\end{flushright}
\ifstrequal{#1}{.}{}{
\begin{center}
{\Large Homework #1}
\end{center}}
}
\renewcommand{\p}[1]{(#1)}
\newcommand{\pb}[1]{\left(#1\right)}
\newcommand{\seq}{\pb}
\newcommand{\qpr}{positive-recurrent}
\newcommand{\fora}[2]{a^{\ssc{#1}}_{#2}}
\newcommand{\forab}[2]{\fora{#1}{#2}}
\newcommand{\forb}[2]{b^{\ssc{#1}}_{#2}}
\newcommand{\forba}[2]{\forb{#1}{#2}}
\newcommand{\kn}{k}
\newcommand{\kk}{d}
\newcommand{\nii}{n}
\newcommand{\ssc}[1]{\p{#1}}
\newcommand{\fa}[1]{\seq{\fora{#1}{\kn}}}
\newcommand{\fb}[1]{\seq{\forb{#1}{\kn}}}
\begin{document}
%
%\lstset{breaklines=true, morecomment=[l]{//}, frame=single, showstringspaces=false, numbers=left}
%
%\begin{flushright}
%Nathan Fox\\
%Math 640\\
%\end{flushright}
%\begin{center}
%\LARGE{On Aperiodic Subtraction Games with Bounded Nim Sequence}
%\end{center}
%\namehead{Math 640}{\LARGE{On Aperiodic Subtraction Games with Bounded Nim Sequence}}
\title{Finding Linear-Recurrent Solutions to Hofstadter-Like Recurrences Using Symbolic Computation}
\author{Nathan Fox\footnote{Department of Mathematics, Rutgers University, Piscataway, New Jersey,
\texttt{fox@math.rutgers.edu}
}}
\date{}

\maketitle

\begin{abstract}
The Hofstadter $Q$-sequence, with its simple definition, has defied all attempts at analyzing its behavior.  Defined by a simple nested recurrence and an initial condition, the sequence looks approximately linear, though with a lot of noise.  But, nobody even knows whether the sequence is infinite.  In the years since Hofstadter published his sequence, various people have found variants with predictable behavior.  Oftentimes, the resulting sequence looks complicated but provably grows linearly.  Other times, the sequences are eventually linear recurrent.  Proofs describing the behaviors of both types of sequence are inductive.  In the first case, the inductive hypotheses are fairly ad-hoc, but the proofs in the second case are highly automatable.  This suggests that a search for more sequences like these may be fruitful.  In this paper, we develop a step-by-step symbolic algorithm to search for these sequences.  Using this algorithm, we determine that such sequences come in infinite families that are themselves plentiful.  In fact,there are hundreds of easy to describe families based on the Hofstadter $Q$-recurrence alone.
\end{abstract}

\section{Introduction}
The Hofstadter $Q$-sequence, which is defined by the recurrence $Q\p{\nii}=Q\p{\nii-Q\p{\nii-1}}+Q\p{\nii-Q\p{\nii-2}}$ and the initial conditions $Q\p{1}=Q\p{2}=1$, was first introduced by Douglas Hofstadter in the 1960s~\cite{geb}.  The first fifteen terms increase monotonically, but thereafter the sequence rapidly devolves into chaos.  On the macro level, $Q\p{\nii}$ seems to oscillate around $\frac{\nii}{2}$, but nobody has been able to prove anything more than the statement that if
\[
\limi{\nii}{\frac{Q\p{\nii}}{\nii}}
\]
exists, it must equal one half~\cite{golomb}.  Furthermore, nobody has been able to prove that $Q\p{\nii}$ even \emph{exists} for all $\nii$.  If $Q\p{\nii-1}\geq \nii$ for some $\nii$, then $Q\p{\nii}$ would be defined in terms of $Q\p{k}$ for some $k\leq 0$.  But, $Q$ is not defined on nonpositive inputs, so $Q\p{\nii}$ would fail to exist.  All subsequent terms would also fail to exist, so the sequence would be finite in this scenario.  If a sequence is finite because of this sort of happenstance, we say that the sequence \emph{dies}.  We do not know whether the Hofstadter $Q$-sequence dies, but we do know that it exists for at least $10^{10}$ terms~\cite[A005185]{oeis}.

More recently, Conolly examined the recurrence $C\p{\nii}=C\p{\nii-C\p{\nii-1}}+C\p{\nii-1-C\p{\nii-2}}$ with $C\p{1}=C\p{2}=1$ as the initial condition~\cite{con}.  This setup looks quite similar to the $Q$-recurrence; the only difference is the $-1$ in the second term.  But, the resulting sequence grows monotonically and satisfies
\[
\limi{\nii}{\frac{C\p{\nii}}{\nii}}=\frac{1}{2}.
\]
In particular, $C\p{\nii}-C\p{\nii-1}$ is always either $0$ or $1$, a property commonly known as \emph{slow}.  Similar sequences have been studied by other authors~\cite{tanny}.  Most of the sequences they have analyzed use recurrences with shifts similar to the one in Conolly's recurrence, though they also analyze the Hofstadter $V$-Sequence, given by $V\p{\nii}=V\p{\nii-V\p{\nii-1}}+V\p{\nii-V\p{\nii-4}}$ with $V\p{1}=V\p{2}=V\p{3}=V\p{4}=1$, which is also slow~\cite{hofv}.  Another example of a slow sequence is the Hofstadter-Conway \$10000 Sequence, given by $A\p{\nii}=A\p{A\p{\nii-1}}+A\p{\nii-A\p{\nii-1}}$ with $A\p{1}=A\p{2}=1$.  Conway famously offered a \$10000 prize for a sufficient analysis of the behavior of this sequence, which was claimed by Colin Mallows a few years later~\cite{mallows}.

There are two methods commonly used to prove that Hofstadter-like sequences are slow.  In some cases, there are combinatorial interpretations for slow sequences involving counting leaves in nested tree structures~\cite{isgur2}.  The sequence counting the leaves is obviously slow; the main difficulty comes in showing that the nested recurrence also describes the same structure.  For some slow sequences, though, there is no known combinatorial interpretation.  The other proofs of slowness usually go by induction with complicated inductive hypotheses.  For a sequence $\seq{a_n}$, one would love to work with just the inductive hypothesis that $a_m-a_{m-1}\in\st{0,1}$ for all $m<n$, but this is never enough.  Instead, additional inductive hypotheses are required to handle certain cases.  These extra hypotheses strongly depend on the sequence in question.  While the \quot{shifted} sequences have similar proofs to each other, the proof for the Hofstadter $V$-sequence uses a different sort of inductive hypothesis.  One would like to try to automate these proofs, but this would require some method of determining the appropriate inductive hypotheses.

While the study of slow sequence has been fruitful~\cite{erickson,isgur1,isgur2,tanny}, other predictable Hofstadter-like sequences have been found.  Whereas slow sequences often result from varying the recurrence while retaining the initial condition of the $Q$-sequence, sequences from this other class often result from varying the initial condition while retaining the original $Q$-recurrence.  The first such example comes from Golomb~\cite{golomb}, who replaces the initial conditions of the $Q$-sequence with the new values $Q\p{1}=3$, $Q\p{2}=2$, $Q\p{3}=1$.  (We will use the shorthand $\bk{3,2,1}$ to describe this initial condition, and we will use similar notation going forward.)  The first few terms of the resulting sequence are $3,2,1,3,5,4,3,8,7,3,11,10,\ldots$, and the pattern continues forever, giving the linear-recurrent (in fact, quasilinear) sequence defined by
\[
\begin{cases}
Q\p{3\kn}=3\kn-2\\
Q\p{3\kn+1}=3\\
Q\p{3\kn+2}=3\kn+2.
\end{cases}
\]

In our current setup, the only linear recurrent solutions we can possibly obtain to the Hofstadter $Q$-recurrence would be quasilinear, since we require $Q\p{\nii}\leq \nii$ for all $\nii$.  For our purposes, this condition is overly restrictive.  To allow ourselves more flexibility, we will instead say that $Q\p{\nii}=0$ whenver $\nii\leq0$.  The original question of the existence of the Hofstadter $Q$-sequence can still be asked: \quot{Is $Q\p{\nii}\leq \nii$ for all $\nii$?}  Note that it is still possible for a sequence to die in this new setting.  For example, in the Hofstadter $Q$-sequence under this convention, if it happens that \emph{both} $Q\p{\nii-1}\geq \nii$ and $Q\p{\nii-2}\geq \nii$, then we obtain $Q\p{\nii}=0$.  As a result, $Q\p{\nii+1}$ would be defined in terms of itself, and hence it would be undetermined.
%(Unless $Q\p{\nii-1}=\nii$, the definition will be $Q\p{\nii+1}=Q\p{\nii+1}$, so it may seem like we can choose $Q\p{\nii+1}$ to be whatever value we like, but we are going to require our solutions to be explicitly determined.  This also means we cannot have infinitely many nonpositive terms in any of our sequences.)

This more general setting allows us to find linear-recurrent solutions to the Hofstadter $Q$-recurrence that were not possible before.  One of these was found by Ruskey, who used the initial condition 
%$Q\p{1}=3$, $Q\p{2}=6$, $Q\p{3}=5$, $Q\p{4}=3$, $Q\p{5}=6$, $Q\p{6}=8$
$\bk{3,6,5,3,6,8}$ 
to embed the Fibonacci numbers in the Hofstadter $Q$-recurrence~\cite{rusk}.  This sequence takes values
\[
\begin{cases}
Q\p{3\kn}=F_{\kn+4}\\
Q\p{3\kn+1}=3\\
Q\p{3\kn+2}=6.
\end{cases}
\]

The solutions of Golomb and Ruskey both satisfy linear recurrences ($a_n=2a_{n-3}-a_{n-6}$ in the case of Golomb; $a_n=2a_{n-3}-a_{n-9}$ in the case of Ruskey).  We would like to develop a general method for finding more solutions like these.  One might hope for a method that, given a linear recurrence and a Hofstadter-like recurrence, determines whether there is a sequence that eventually satisfies both of them.  Unfortunately, this is quite a lofty goal.  Rather, we exploit a deeper structure of these sequences and generalize that.  Golomb's sequence is a quasipolynomial with period $3$.  Ruskey's sequence, while not a quasipolynomial, is also structured as an interleaving of three simpler sequences.  We will describe an automatic way of searching for such interleaved solutions, where the following items are allowed to vary:
\begin{itemize}
\item The recurrence under consideration
\item The number of interleaved sequences
\item The growth rate of each of the interleaved sequences.
\end{itemize}
The methods of this paper apply to \emph{linear} nested recurrences.  These are recurrences of the form
\[
Q\p{\nii}=P\p{\nii}+\sum_{i=1}^{\kk}\alpha_i Q\p{E_i},
\]
where $P\p{\nii}$ is an explicit expression in $\nii$, $k$ is a nonnegative integer, each $\alpha_i$ is an integer, and each $E_i$ is an expression of the same form as the generic formula for $Q\p{\nii}$ (thereby allowing for arbitrarily many nesting levels).  The methods will not apply completely in all cases; certain positivity conditions will need to be satisfied.  Most commonly, we will have $P\p{\nii}=0$, each $\alpha_i$ positive, and each $E_i$ of the form $n-\beta_i-Q\p{\nii-\gamma_i}$ for some nonnegative integer $\beta_i$ and positive integer $\gamma_i$.  (This is the case for the Hofstadter $Q$-recurrence, where $P\p{\nii}=0$, $k=2$, $\alpha_1=\alpha_2=1$, $E_1=\nii-Q\p{\nii-1}$, and $E_2=\nii-Q\p{\nii-2}$.)  We will call recurrences satisfying this condition about the $E_i$'s \emph{basic}, and our methods will always apply fully to basic recurrences.

In Section~\ref{sec:prs}, we will introduce a formalism that encapsulates the notion of an interleaving of simple sequences.  Then, in Section~\ref{sec:soft}, we will describe our algorithm that finds these special solutions.  Finally, in Sections~\ref{sec:find} and~\ref{sec:fut}, we will describe some notable sequences found using the methods in this paper, and we will discuss some future extensions.

A Maple package implementing all of the algorithms in this paper, as well as some related procedures, can be found at \texttt{http://math.rutgers.edu/$\sim$nhf12/nicehof.txt}.  Generally speaking, the procedures in this package offer more general versions of the algorithms described in this paper.  For example, the code allows the user to modify the \quot{default} initial conditions (for when $\nii\leq0$).
%%%%%%%%%%%%%%%%%%%%%%%%%%%
\section{Positive Recurrence Systems}\label{sec:prs}
%In this section, we will introduce the notion of a positive recurrence system.  This will help us discuss the types of solutions we are seeking.
In the case of Ruskey's solution, each of the three interleaved sequences can be described by a homogeneous linear recurrence with nonnegative coefficients:
\[
\begin{cases}
%a_n=a_{n-2} & a_0=3, a_1=3\\
a_{\kn}=a_{\kn-1} & a_0=3\\
b_\kn=b_{\kn-1} & b_0=6\\
c_\kn=c_{\kn-1}+c_{\kn-2} & c_1=5, c_2=8.
\end{cases}
\]
%(These recurrences, notably the first one, are not unique.  The recurrences given here are the ones that the algorithm will produce for these solutions.)
(Note that these recurrences are not unique.)
Golomb's solution cannot be expressed in this way,
%but it can be expressed
%as a system of linear recurrences
but each of its interleaved sequences can be described by a nonhomogeneous linear recurrence
with nonnegative coefficients:
\[
\begin{cases}
a_\kn=3+a_{\kn-1} & a_0=3\\
b_\kn=3\\
c_\kn=3+c_{\kn-1} & c_1=1.
\end{cases}
\]
In both of these cases, we have a system of three nonhomogneous linear recurrences where all coefficients are nonnegative.  Here, none of the recurrences refer to each other in their definitions, but theoretically this should be possible.
%We will generalize these notions.
This leads to the following generalization:
\begin{defin}\label{def:prs}
A \emph{positive recurrence system} is a system of $m$ nonhomogeneous linear recurrences of the form
\[
\begin{cases}
\forab{1}{\kn}=P_1\p{\kn}+\sum\limits_{\ell=1}^{\kk}\sum\limits_{j=1}^m\alpha_{1,\ell,j} \forab{j}{\kn-\ell}\\
\forab{2}{\kn}=P_2\p{\kn}+\sum\limits_{\ell=1}^{\kk}\sum\limits_{j=1}^m\alpha_{2,\ell,j} \forab{j}{\kn-\ell}\\
\vdots\\
\forab{m}{\kn}=P_m\p{\kn}+\sum\limits_{\ell=1}^{\kk}\sum\limits_{j=1}^m\alpha_{m,\ell,j} \forab{j}{\kn-\ell}
\end{cases}
\]
satsifying the following conditions:
\begin{itemize}
\item $\kk$ is a nonnegative integer.
\item $P_1$ through $P_m$ are eventually nonnegative integer-valued polynomials.
\item Each $\alpha_{i,\ell,j}$ is a nonnegative integer.
\end{itemize}
\end{defin}
%Observe that this definition makes one additional concession; we allow the nonhomogeneous part to be any integer-valued polynomial, regardless of positivity.

Note that, for convenience, we may sometimes have a recurrence system where $\forab{i}{\kn}$ refers to $\forab{j}{\kn}$ for some $j<i$.  This is permissible, as we can just replace $\forab{j}{\kn}$ with its right-hand side in order to conform to Definition~\ref{def:prs}.

The solutions to Hofstadter-like recurrences that we seek will
%contain an entire solution to a positive recurrence system.
eventually be interleavings of sequences that together satisfy a positive recurrence system.  What follows is a formalization of this notion:
\begin{defin}
Let $m$ be a positive integer.  The sequence $\seq{a_\kn}_{\kn\geq1}$ is \emph{\qpr} with period $m$ if there exist an integer $K$ such that the sequences $\st{\seq{a_{ms+r}}_{s\geq K}: 0\leq r<m}$ satisfy a positive recurrence system.
\end{defin}
Observe that eventually nonnegative polynomials are trivially {\qpr} with period $1$, as we can take $\kk=0$.  Also, any sequence satisfying a homogeneous linear recurrence with nonnegative coefficients is {\qpr} with period $1$, as we can take $P_1$ identically $0$.

Generally speaking, any positive-recurrent sequence is eventually linear recurrent, as are each of the component sequences.  This is true because a positive recurrence system can be converted into a linear system of equations for the generating functions of the component sequences.  Each generating function is therefore a rational function.

We will be concerned with determining the rate of growth of each sequence
%satisfying a
in a solution to a
positive recurrence system.  
In order for things to be well-defined and easy to analyze, we will need the following technical definition.
\begin{defin}
An initial condition of length $N$ to a positive recurrence system is called \emph{eventually positive} if the following conditions hold:
\begin{itemize}
\item If $\kn\geq N$, then $P_r\p{\kn}\geq0$ for all $r$ and all $\kn$.
\item For all $0\leq i\leq \kk$, $\forab{r}{N-i}>0$ for all $r$.
\end{itemize}
\end{defin}

Any long enough positive initial condition is eventually positive.  But, this definition allows for some nonpositive values early in the initial condition, so long as those values are never used in calculating recursively defined terms.  Furthermore, we require all of the polynomials to be nonnegative when calculating recursive terms.  This will be useful in our analysis, though it is not strictly necessary.  (A much more complicated, weaker condition would suffice, and, in that case, we would not even need all of the polynomials to be eventually nonnegative.)

%In the case where we have a solution consisting entirely of positive sequences, the following algorithm does precisely this:
In the case where we have a solution to a positive recurrence system given by an eventually positive initial condition, the following algorithm determines the order of growth of each component sequence.
\begin{enumerate}
\item \label{it:G}Define a weighted directed graph $G$ as follows:
\begin{itemize}
\item The vertices of $G$ are the integers $\st{1,\ldots,m}$.
\item There is an arc from $i$ to $j$ if and only if, for some $\ell$, $\alpha_{i,\ell,j}>0$.
\item The weight of the arc from $i$ to $j$ is
\[
\sum_{\ell=1}^{\kk}\alpha_{i,\ell,j}.
\]
\end{itemize}
\item Initialize variables $d_1, d_2,\ldots, d_m$ so that $d_i$ equals the degree of $P_i$.
\item\label{st:inf} Let $W$ denote the set of vertices $v$ in $G$ satisfying one of the following:
\begin{itemize}
\item $v$ is in a directed circuit with at least one arc having weight greater than $1$.
\item $v$ is in more than one directed circuit that does not contain $v$ as an intermediate vertex.
\end{itemize}
For each $v\in W$, set $d_v$ to $\infty$ and delete any outgoing arc from $v$ in $G$ that is part of a cycle.  Call the resulting graph $G'$.  (We can actually delete \emph{all} outgoing arcs from $v$, but the form we have stated here will be more useful when we prove this algorithm's correctness.)
\item Define the following relation $\sim$ on $\st{1,2,\ldots,m}$:
\[
i\sim j\text{ if and only if }\p{i=j}\text{ or }\p{i\text{ and }j\text{ are in a cycle together in }G'}.
\]
It is easy to check that $\sim$ is an equivalence relation.  Each equivalence class is either a single vertex or a cycle.  (Every vertex in a cycle is in exactly one cycle, or else we would have removed all of its cycle-making outgoing arcs in step~\ref{st:inf}, and all such vertices are equivalent to each other.)
\item Define a directed graph $H$ as follows:
\begin{itemize}
\item The vertices of $H$ are the equivalence classes of $\st{1,2,\ldots,m}$ under $\sim$.
\item There is an arc from class $I$ to class $J$ if and only if there is an arc in $G'$ from some $i\in I$ to some $j\in J$.
\end{itemize}
If $H$ contains a directed cycle $I_1,I_2,\ldots,I_q$, then for each $1\leq h<q$, there is an arc in $G'$ from some $i^{out}_h\in I_h$ to some $i^{in}_{h+1}\in I_{h+1}$.  Also, there is an arc in $G'$ from some $i^{out}_q\in I_q$ to some $i^{in}_1\in I_1$.  Furthermore, by the definition of $\sim$, for each $h$, there is a (possibly trivial) directed path from $i^{in}_h$ to $i^{out}_h$ within $I_h$.  Concatenating all of these arcs together gives a cycle in $G'$ that includes elements of multiple equivalence classes, which contradicts the definition of $G'$.
%We claim that $H$ is acyclic.

So, we can conclude that $H$ contains no directed cycles.
\item For each vertex $I$ of $H$, initialize a variable $d_I=\max\limits_{i\in I}d_i$.
\item\label{st:ts} Topologically sort the vertices of $H$.  Consider the vertices $I$ from last to first:
\begin{itemize}
\item If $I$ is a cycle in $G'$ (including a single vertex with a loop), set $d_I$ to $d_I+1$.
\item For all $J$ with an arc from $J$ to $I$, set $d_J=\max\pb{d_J, d_I}$.
\end{itemize}
%\item Once all vertices of $H$ have been processed,
%the degree of $a_r$ will be the value $d_R$ for the equivalence class $R$ containing $r$.  (A degree of $\infty$ means that $a_r$ grows exponentially.)
\end{enumerate}
At the end of this process, we have values $d_I$ for each equivalence class $I$.  In general, for an integer $r$, let $\bar{r}$ denote its equivalence class under $\sim$.  We now make the following claim:
%%%%%%%%%%%%%%%%%%%
\begin{restatable}{claim}{clmain}
\label{cl:main}
%\begin{claim}\label{cl:main}
Suppose we have a positive recurrence system with $m$ component sequences, along with an eventually positive initial condition.  Let $1\leq r\leq m$ be an integer. %Assume we have an initial condition of length $N$ such that $P_{r'}\p{n}\geq0$ and $a^{r'}_n\geq0$ for all $1\leq r'\leq m$ and all $n\geq N$.
%Assume we have an eventually positive initial condition. 
If $d_{\bar{r}}<\infty$, then $\forab{r}{\kn}=\Theta\pb{\kn^{d_{\bar{r}}}}$.  If $d_{\bar{r}}=\infty$, then $\forab{r}{\kn}$ grows exponentially.
%\end{claim}
\end{restatable}
The proof of this claim is uninteresting, so it has been relegated to Appendix~\ref{sec:pfclmain}.  The general strategy is to inductively substitute expressions around cycles in order to obtain recurrences for sequences $\forab{r}{\kn}$ that refer only to earlier versions of themselves.  Then, we use the general theory of linear recurrences to make claims about their asymptotics.
%%%%%%%%%%%%%%%%%%%
\section{The Algorithm}\label{sec:soft}
%Mention something about induction
%What is the allowed form of the recurrence? (define standard)
%Mention that we're going to assume Q is linear.  Should still work if not, but there are some nuances
%TODO maybe another example with steep linear? ([\infty, 4, 4, 0], [[3,0]])
%Perhaps do examples at end?
%What follows is a description of a generic run of the main procedure in the program, \texttt{FindQgSolutions}.  We will walk through the steps of searching for quasiperiod-$m$ solutions to a nested recurrence $Q\p{n}$.  The software supports many variations on this generic run.  We will follow the steps of the algorithm with a common example: searching for quasiperiod-$4$ solutions to the recurrence
What follows is a description of a generic run of our algorithm for finding positive-recurrent solutions to nested recurrences.  We will walk through the steps of searching for period-$m$ solutions to a linear nested recurrence $Q\p{\nii}$.
A dominant term in the complexity of our algorithm is that it is exponential in $m$, so the first thing we do is fix $m$.
%The software supports many variations on this generic run.
We will follow the steps of the algorithm by applying it to a running example: searching for period-$4$ solutions to the recurrence
\[
R\p{\nii}=R\p{\nii-R\p{\nii-1}}+R\p{\nii-R\p{\nii-2}}+R\p{\nii-R\p{\nii-3}}.
\]
We choose this particular example because it will illustrate many facets of the algorithm without becoming too unwieldy.
%%%%%%%%%%%%%%%%%
\subsection{Fixing the Behavior of the Subsequences}
A positive-recurrent solution to $Q\p{\nii}$ with period $m$ has the form
%\[
%\begin{cases}
%Q\p{mk}=f_0\p{k}\\
%Q\p{mk+1}=f_1\p{k}\\
%Q\p{mk+2}=f_2\p{k}\\
%\hspace{0.5in}\vdots\\
%Q\p{mk+\p{m-1}}=f_{m-1}\p{k}
%\end{cases}
%\]
%for some eventually monotone functions $f_0, f_1, \ldots, f_{m-1}$.  We define the following growth properties that these functions may have:
\[
\begin{cases}
Q\p{m\kn}=\fora{0}{\kn}\\
Q\p{m\kn+1}=\fora{1}{\kn}\\
Q\p{m\kn+2}=\fora{2}{\kn}\\
\hspace{0.5in}\vdots\\
Q\p{mk+\p{m-1}}=\fora{m-1}{\kn}
\end{cases}
\]
for some sequences $\fa{0}$ through $\fa{m-1}$.  %for some eventually monotone functions $f_0, f_1, \ldots, f_{m-1}$.
(For convenience, we index the interleaved sequences from zero in this context.)  
For convenience, we define the following growth properties that these component sequences may have:
\begin{defin}\lb
\begin{itemize}
\item We say $\fa{r}$ is \emph{constant} if, for sufficiently large $\kn$, $\fora{r}{\kn}=A$ for some constant $A$.
\item We say $\fa{r}$ is \emph{linear} if, for sufficiently large $\kn$, $\fora{r}{\kn}=A\kn+B$ for some constants $A$ and $B$.
\item We say $\fa{r}$ is \emph{superlinear} if $\fora{r}{\kn}=\omega\p{\kn}$.
\item We say that $\fa{r}$ is \emph{standard linear} if $\fora{r}{\kn}=m\kn+B$ for some constant $B$.
\item We say that $\fa{r}$ is \emph{steep linear} if $\fora{r}{\kn}=A\kn+B$ for some constants $A$ and $B$ with $A>m$.
\item We say that $\fa{r}$ is \emph{steep} if $\fa{r}$ is either steep linear or superlinear.
\end{itemize}
\end{defin}
To start the algorithm, we need to decide, for each of the $m$ component sequences, are we looking for a solution where that subsequence is  \emph{constant}, \emph{standard linear}, or \emph{steep}?
(We do not concern ourselves with component sequences intermediate in growth between constant and standard linear, as they seem to be uncommon in this context and can be harder to analyze.  But, the Maple package can sometimes handle these, as long as the user explicitly asks it to.)
%At this point, we
%We 
%only care whether each $\fora{r}{}$ is \emph{constant}, \emph{standard linear}, or \emph{steep}.  (We will not consider cases where $\fora{r}{}$ is none of these.)  Future steps of the algorithm will require us to fix one of these growth patterns for each $\fora{r}{}$, so, at this stage, we simply iterate through all possibilities by setting values for variables
%$a'_0, a'_1,\ldots,a'_{m-1}$.  We set $a'_r=0$ if $f_r$ is constant, $a'_r=m$ if $f_r$ is standard linear, and $a'_r=\infty$ if $f_r$ is steep.
%$a_0, a_1,\ldots,a_{m-1}$.  We set $a_r=0$ if $f_r$ is constant, $a_r=m$ if $f_r$ is standard linear, and $a_r=\infty$ if $f_r$ is steep.
To keep track of our choices, the algorithm stores variables $A_0,A_1,\ldots,A_{m-1}$.
%If we decide that sequence $\fora{r}{}$ should be constant, we set the value of a variable $A_r$ to $0$.  If we make it standard linear, we set
We set $A_r=0$ if we decide that $\fa{r}$ is to be constant, $A_r=m$ if $\fa{r}$ is standard linear, and $A_r=\infty$ if $\fa{r}$ is steep.  Observe that if $\fa{r}$ is not steep, then $\fa{r}=A_r\kn+B_r$ for some constant $B_r$.
In general, to perform an exhaustive search for positive-recurrent solutions, we iterate through the $3^m$ possible overall behaviors.

In our running example, we seek a solution of the form
\[
\begin{cases}
R\p{4\kn}=\fora{0}{\kn}\\
R\p{4\kn+1}=\fora{1}{\kn}\\
R\p{4\kn+2}=\fora{2}{\kn}\\
R\p{4\kn+3}=\fora{3}{\kn}.
\end{cases}
\]
Going forward, we will assume we are treating the case $A_0=\infty$, $A_1=4$, $A_2=0$, and $A_3=0$.  That is, we are seeking a solution with $\fa{0}$ steep, $\fa{1}$ standard linear, and the other two sequences constant.
%%%%%%%%%%%%%%%%%
\subsection{Unpacking the Recurrence}
Now that we have fixed the forms of the $\fa{r}$, we can use these forms inductively to \quot{unpack} the recurrent form for each $Q\p{m\kn+r}$.  (The base case of the induction is covered by the fact that we allow an arbitrarily long initial condition.)  We start from the innermost calls to $Q$ (i.e., calls without another $Q$ inside them) and work our way outward, eliminating (nearly) all of the the $Q$'s with $\kn$'s inside them.  For an expression of the form $Q\p{m\kn+r}$, we replace it by the expression $A_r\kn+B_r$, where $A_r$ is the variable tracking the growth of $\fa{r}$ and $B_r$ is a symbol.  
%Assuming the recurrence is standard, each 
Each step after rewriting the inner calls involves looking at an expression of the form $Q\p{c}$, $Q\p{-\infty\kn-c}$, or $Q\p{m\kn-c}$ for some, possibly symbolic, constant $c$.  Expressions of the first type are constant; expressions of the second type are zero (by our convention that evaluation at nonpositive indices gives zero).  In both of these cases, we can immediately move outward.  Expressions of the third type equal 
$\forab{r}{k-t}$ for $r=\pb{-c}\modd m$ and $t=\ceil{\frac{c}{m}}$.  
%some $\fa{r}$ of $\kn$ minus some constant.
This all depends on the congruence class of $c$ mod $m$, since
\[
\ceil{\frac{c}{m}}=\begin{cases}
\frac{c}{m} & c\equiv0\pb{\modd m}\\
\frac{c+m-\pb{c\modd m}}{m} & \text{otherwise}.
\end{cases}
\]
So, at this step, we iterate through all possible congruence classes of these constants.

Once we do this, we replace $Q\p{m\kn-c}$ by $A_r\p{\kn-t}+B_r$ ( $r=\pb{-c}\modd m$ and $t=\ceil{\frac{c}{m}}$) for symbolic constant $B_r$.  In the case the $\fa{r}$ is steep, this replacement will ensure that the coefficient on $\kn$ in $m\kn-Q\p{m\kn-c}$ will be $-\infty$.  If $\fa{r}$ is not steep, this simply substitutes the formula for $\fa{r}$ into the recurrence.
%In the case that we are considering an outermost $Q$ and $f_r$ is steep, we will not replace the expression $Q\p{mk-c}$ at all; we will leave it in this symbolic form.
In the case that we are considering an outermost $Q$ and $\fa{r}$ is steep, we do not replace the call to $\fa{r}$; we will leave it unevaluated.

%If every nested term of the recurrence has the form
%\[
%g_iQ\p{\nii-d_i-Q\p{\nii-e_i}}
%\]
%for some constants $d_i$, $e_i$, and $g_i$
%(like Hofstadter's $Q$-recurrence or our running example) and has no symbols besides $Q$ and $\nii$, then the congruence choices we must make in this step correspond exactly to choosing congruence classes mod $m$ for the constant $\fa{r}$'s. TODO PROVE THIS
If $Q\p{\nii}$ is basic, then the congruence choices we will have to make in this step are easily predetermined.
\begin{prop}\label{prop:stdb}
If $Q\p{\nii}$ is basic, then the $c$ values appearing in expressions $Q\p{m\kn-c}$ that appear at the outer level when unpacking the recurrence are precisely determined by fixing the values of the $B_r$'s for which $\fa{r}$ is constant.
\end{prop}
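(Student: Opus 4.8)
The plan is to exploit the defining feature of a basic recurrence: it nests only one level deep, so the unpacking procedure terminates after a single round of substitutions and every outer-level call can be resolved in closed form. Write the basic recurrence as
\[
Q(\nii) = \sum_i \alpha_i\, Q\pb{\nii - \beta_i - Q(\nii - \gamma_i)},
\]
and specialize to $\nii = m\kn + s$ for the fixed residue $s$ whose subsequence we are computing. For each summand $i$ the inner call is $Q(m\kn + s - \gamma_i)$. Because $\gamma_i$ and $s$ are fixed integers, its residue $r_i = \pb{s - \gamma_i}\modd m$ is a genuine constant, \emph{independent of any symbolic choice}; hence the inner call is $\fora{r_i}{\kn - t_i}$ with $t_i = \ceil{(\gamma_i - s)/m}$ fixed, and the unpacking rule rewrites it as $A_{r_i}(\kn - t_i) + B_{r_i}$ (or leaves it steep when $A_{r_i} = \infty$).

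Next I would substitute this into the outer argument and read off the coefficient of $\kn$. The argument of the outer $Q$ becomes
\[
m\kn + s - \beta_i - \pb{A_{r_i}(\kn - t_i) + B_{r_i}} = (m - A_{r_i})\kn + \pb{s - \beta_i + A_{r_i} t_i - B_{r_i}}.
\]
There are exactly three cases, governed by $A_{r_i}\in\st{0,m,\infty}$. If $A_{r_i} = m$ (standard linear), the coefficient of $\kn$ is $0$, so the outer call is of the constant type $Q(c)$ and contributes no expression $Q(m\kn - c)$. If $A_{r_i} = \infty$ (steep), the coefficient of $\kn$ is $-\infty$, so the outer call is of type $Q(-\infty\kn - c)$ and again contributes nothing of the desired form. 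Only when $A_{r_i} = 0$ (constant) does the coefficient equal $m$, and then the outer call is precisely $Q(m\kn - c)$ with
\[
c = \beta_i + B_{r_i} - s.
\]

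Finally I would conclude. Every outer-level expression $Q(m\kn - c)$ arises from a summand $i$ whose inner subsequence $\fa{r_i}$ is constant, and in that case $c = \beta_i + B_{r_i} - s$ is an explicit affine function of $B_{r_i}$ --- the offset of a \emph{constant} component --- together with the fixed data $\beta_i$ and $s$. Thus once the values $B_r$ for the constant components are fixed, every such $c$, and therefore both its residue $\modd m$ and its ceiling $\ceil{c/m}$, is determined, so there is no independent freedom left to iterate over congruence classes. The only point requiring genuine care is the claim underpinning all of this --- that a basic recurrence nests exactly one level, so that each inner residue $r_i$ is a true constant rather than a symbolic quantity whose class we would have to branch on; the remaining manipulation of the outer argument and its case split is routine arithmetic.
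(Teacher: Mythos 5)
Your proof is correct and follows essentially the same route as the paper's: substitute $\nii=m\kn+s$ into the basic recurrence, observe that the single level of nesting makes each inner residue $r_i=(s-\gamma_i)\bmod m$ a genuine constant, and split into the three cases constant/standard linear/steep to see that only the constant case produces an outer $Q\p{m\kn-c}$, with $c$ an explicit affine function of $B_{r_i}$. The paper presents the same three cases by substituting the eventual form of each inner subsequence directly rather than via the uniform expression $(m-A_{r_i})\kn+\cdots$, but this is only a cosmetic difference.
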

\begin{proof}
If the recurrence for $Q\p{\nii}$ is basic, it is of the form
\[
Q\p{\nii}=P\p{\nii}+\sum_{i=1}^d\alpha_iQ\p{\nii-\beta_i-Q\p{\nii-\gamma_i}}.
\]
Substituting $\nii=m\kn+r$, we obtain
\[
Q\p{m\kn+r}=P\p{m\kn+r}+\sum_{i=1}^d\alpha_iQ\p{m\kn+r-\beta_i-Q\p{m\kn+r-\gamma_i}}.
\]
The innermost expressions are the expressions $Q\p{m\kn+r-\gamma_i}$.  For notational ease, let $r_i=\p{r-\gamma_i}\modd m$.  There are three cases to consider:
\begin{description}
\item[$Q\p{m\kn+r-\gamma_i}$ is constant:] In this case, $Q\p{m\kn+r-\gamma_i}=B_{r_i}$.  We then see the expression $Q\p{m\kn-\beta_i-B_{r_i}}$ at the outer level.  Since $\beta_i$ is a predetermined constant, the constant $\beta_i+B_{r_i}$ that appears here is completely determined by fixing $B_{r_i}$.  Furthermore, $\fa{r_i}$ is constant; it eventually equals $B_{r_i}$.
\item[$Q\p{m\kn+r-\gamma_i}$ is standard linear:]  In this case, $Q\p{m\kn+r-\gamma_i}=m\kn+B_{r_i}$.  This leads to the expression $Q\p{-\beta_i-B_{r_i}}$ at the outer level, which is a constant.
\item[$Q\p{m\kn+r-\gamma_i}$ is steep:] In this case, the expression that appears at the outer level in this term is automatically zero.
\end{description}
In summary, only one case results in an outer expression of the form $Q\p{m\kn-c}$, and the value of $c$ in that case is completely determined by the value of $B_r$ for an $r$ with $\fa{r}$ constant, as required.
\end{proof}

The net result of this step is that we can now read off a recurrence system for the component sequences.  If the recurrence was basic, we claim that this will be a positive recurrence system. First, all coefficients in the homogeneous part of the system will be nonnegative, because the coefficients on the recursive calls in a basic recurrence are all positive.  Also, all constant component sequences will be positive, since we cannot have a solution with infinitely many nonpositive entries (or else we would not be able to explicitly calculate terms).  Beyond this, nonhomogeneous parts are built out of recursive calls, which will inductively result in eventually nonnegative polynomials.

In general, in order to guarantee the correctness of our algorithm, we need the nested recurrence to be such that we obtain a positive recurrence system in this step.  We stated earlier that the eventual nonnegativity condition on the nohnomogeneous parts is overly restrictive for what we want to do with positive recurrence systems, so it makes sense to continue to run the rest of this algorithm even without a positive recurrence system.  The algorithm may still succeed, but we can no longer guarantee that it will succeed.
%It is clear that all of the coefficients in the homogeneous part of the system will be positive, but, technically speaking, it may not actually be a positive recurrence system, as some of the nonhomogeneous parts may not be eventually nonnegative.  But, this can only happen in one specific way: an expression like $\forab{r}{k}=P_r\p{k}+\forab{r'}{k-\ell}$.  (The homogeneous pr it is not immediately obvious that the nonhomogeneous parts will be eventually nonnegative.  But, we cannot have a solution with infinitely many nonpositive entries (or else we would not be able to explicitly calculate terms), so all constant component sequences must be positive.
%A constant can also appear as the nonhomogeneous part for a term that will grow linearly, but such a constant must be positive in order for the linear sequence to be increasing.

%\subsection{Congruence Classes of the Constants}
%For each $f_r$ that is constant, we eventually have $f_r\p{k}=b_r$ for some constant $b_r$.  The congruence class of $b_r$ mod $m$ will matter when substituting values into the recurrence.  So, at this stage, we must iterate through all possible congruence classes for all the constant pieces.

In our running example, we are concerned with the congruence classes of $B_2$ and $B_3$ mod $4$.  There are eight cases to check.  Going forward, we will assume we are looking at the case where $B_2\equiv0\pb{\modd 4}$ and $B_3\equiv3\pb{\modd 4}$.  Under these assumptions, here is how the recurrence unpacks:
{\allowdisplaybreaks
\begin{align*}
R\p{4\kn}&=R\p{4\kn-R\p{4\kn-1}}+R\p{4\kn-R\p{4\kn-2}}+R\p{4\kn-R\p{4\kn-3}}\\
&=R\!\pb{4\kn-\fora{3}{\kn-1}}+R\!\pb{4\kn-\fora{2}{\kn-1}}+R\!\pb{4\kn-\fora{1}{\kn-1}}\\
&=R\p{4\kn-0\pb{\kn-1}-B_3}+R\p{4\kn-0\pb{\kn-1}-B_2}+R\p{4\kn-4\pb{\kn-1}-B_1}\\
&=R\p{4\kn-B_3}+R\p{4\kn-B_2}+R\p{4-B_1}\\
&=\fora{1}{\kn-\frac{B_3+1}{4}}+\fora{0}{\kn-\frac{B_2}{4}}+R\p{4-B_1}\\
&=4\pb{\kn-\frac{B_3+1}{4}}+B_1+\fora{0}{\kn-\frac{B_2}{4}}+R\p{4-B_1}\\
&=4\kn-B_3-1+B_1+\fora{0}{\kn-\frac{B_2}{4}}+R\p{4-B_1}.
\end{align*}
\begin{align*}
R\p{4\kn+1}&=R\p{4\kn+1-R\p{4\kn}}+R\p{4\kn+1-R\p{4\kn-1}}+R\p{4\kn+1-R\p{4\kn-2}}\\
&=R\!\pb{4\kn+1-\fora{0}{\kn}}+R\!\pb{4\kn+1-\fora{3}{\kn-1}}+R\!\pb{4\kn+1-\fora{2}{\kn-1}}\\
&=R\p{4\kn+1-\infty \kn-B_0}+R\p{4\kn+1-0\pb{\kn-1}-B_3}+R\p{4\kn+1-0\pb{\kn-1}-B_2}\\
&=R\p{-\infty \kn+1-B_0}+R\p{4\kn+1-B_3}+R\p{4\kn+1-B_2}\\
&=0+\fora{2}{\kn-\frac{B_3+1}{4}}+\fora{1}{\kn-\frac{B_2}{4}}\\
&=0\pb{\kn-\frac{B_3+1}{4}}+B_2+4\pb{\kn-\frac{B_2}{4}}+B_1\\
&=B_2+4\kn-B_2+B_1\\
&=4\kn+B_1.
\end{align*}
\begin{align*}
R\p{4\kn+2}&=R\p{4\kn+2-R\p{4\kn+1}}+R\p{4\kn+2-R\p{4\kn}}+R\p{4\kn+2-R\p{4\kn-1}}\\
&=R\!\pb{4\kn+2-\fora{1}{\kn}}+R\!\pb{4\kn+2-\fora{0}{\kn}}+R\!\pb{4\kn+2-\fora{3}{\kn-1}}\\
&=R\p{4\kn+2-4\kn-B_1}+R\p{4\kn+2-\infty \kn-B_0}+R\p{4\kn+2-0\pb{\kn-1}-B_3}\\
&=R\p{2-B_1}+R\p{-\infty \kn+2-B_0}+R\p{4\kn+2-B_3}\\
&=R\p{2-B_1}+0+\fora{3}{\kn-\frac{B_3+1}{4}}\\
&=R\p{2-B_1}+0\pb{\kn-\frac{B_3+1}{4}}+B_3\\
&=R\p{2-B_1}+B_3.
\end{align*}
\begin{align*}
R\p{4\kn+3}&=R\p{4\kn+3-R\p{4\kn+2}}+R\p{4\kn+3-R\p{4\kn+1}}+R\p{4\kn+3-R\p{4\kn}}\\
&=R\!\pb{4\kn+3-\fora{2}{\kn}}+R\!\pb{4\kn+3-\fora{1}{\kn}}+R\!\pb{4\kn+3-\fora{0}{\kn}}\\
&=R\p{4\kn+3-0\kn-B_2}+R\p{4\kn+3-4\kn-B_1}+R\p{4\kn+3-\infty \kn-B_0}\\
&=R\p{4\kn+3-B_2}+R\p{3-B_1}+R\p{-\infty \kn+3-B_0}\\
&=\fora{3}{\kn-\frac{B_2}{4}}+R\p{3-B_1}+0\\
&=0\pb{\kn-\frac{B_2}{4}}+B_3+R\p{3-B_1}\\
&=B_3+R\p{3-B_1}.
\end{align*}}
%%%%%%%%%%%%%%%%%
\subsection{Checking for Structural Consistency}
%At this point, we have expressions for all of the $f_r$'s in terms of constants, $k$, and calls to steep $f_{r'}$'s evaluated at earlier inputs.
The previous step has given us a positive recurrence system that is eventually satisfied by the $\fa{r}$'s.  But, this system may not be consistent with the choices we have made.
First, we check the following things:
\begin{itemize}
\item If $\fa{r}$ is constant, our expression for $Q\p{m\kn+r}$ should consist only of constants.
\item If $\fa{r}$ is standard linear, our expression for $Q\p{m\kn+r}$ should be of the form $m\kn+c$ for some (possibly complicated) constant $c$.
\item If $\fa{r}$ is steep, our expression for $Q\p{m\kn+r}$ should have at least one of the following:
\begin{itemize}
\item A term $d\kn$ with $d>m$.
\item A reference to some steep $\fa{r'}$.
\end{itemize}
\end{itemize}
If any of these is violated for any $r$, there is no solution with the given $A$ values and congruence conditions.

If all of the above conditions are satisfied, we must determine the nature of each steep $\fa{r}$.
In particular, we need to determine if each one is steep linear or superlinear.  As a bonus, we will be able to determine the degree of $\fa{r}$ if it is a polynomial.
%We observe that the expressions we have give a system of recurrences for the steep $f_r$'s.
Since the expressions we have for the $\fa{r}$'s form a positive recurrence system, we can use the algorithm from Section~\ref{sec:prs} to accomplish precisely this task, provided we will start with an eventually positive initial condition.  (We will construct this initial condition later.)  In running this algorithm, we may find that we actually do not have a solution, as the third case above includes expressions like $\forab{r}{k}=\forab{r}{k-1}$, which do not result in steep sequences.

In our example, we verify successfully that $\fa{1}$ is standard linear (the expression we obtained for $R\p{4\kn+1}$ is $4\kn+B_1$) and that $\fa{2}$ and $\fa{3}$ are constant (expressions $R\p{2-B_1}+B_3$ and $B_3+R\p{3-B_1}$ respectively).
%Furthermore, we see that $f_0$ is indeed steep and that it, in fact, grows quadratically.
We now run our algorithm on the positive recurrence system we obtained:
\[
\begin{cases}
\fora{0}{\kn}=4\kn-B_3-1+B_1+\fora{0}{\kn-\frac{B_2}{4}}+R\p{4-B_1}\\
\fora{1}{\kn}=4\kn+B_1\\
\fora{2}{\kn}=R\p{2-B_1}+B_3\\
\fora{3}{\kn}=B_3+R\p{3-B_1}.
\end{cases}
\]
The graph $G$ consists of four vertices.  Vertex $0$ has a loop with weight $1$; the other three vertices are isolated.  We initialize $d_0=1$, $d_1=1$, $d_2=0$, and $d_3=0$.  Step~\ref{st:inf} doesn't affect any of the vertices, so $G'=G$.  Similarly, $\sim$ has no nontrivial relations, so $H\cong G$ via the isomorphism $i\leftrightarrow\st{i}$.  When we process vertex $\st{0}$ in $H$, we set $d_{\st{0}}=2$, and this is the only change made in Step~\ref{st:ts}.  So, we obtain that $\fa{0}$ is quadratic.
%%%%%%%%%%%%%%%%%
\subsection{Building a Constraint Satisfaction Problem}\label{subsec:csp}
If our parameters produce a solution, we now know precisely what the structure of that solution must be.  At this point, we need to see if a solution can actually be realized.
%
%First, we will do a bit of preprocessing so that we can properly handle steep linear $f_r$'s.  We will consider variables $a_0,a_1,\ldots,a_{m-1}$.  If $f_r$ is not steep linear, then we set $a_r=a'_r$.  Otherwise, we will leave $a_r$ undetermined at this time.  We will have, whenever $f_r$ is constant or linear, $f_r\p{k}=a_rk+b_r$.  In our unpacked expressions, we now replace any expressions of the form $f_r\p{k'}$ for steep linear $f_r$ by $a_rk'+b_r$.  After doing this, all remaining unevaluated calls should be in superlinear pieces.

%Now, in 
In 
order to have a solution, we must check the following:
\begin{itemize}
\item If $\fa{r}$ is constant, we must have $B_r>0$.  Otherwise, our solution would have infinitely many nonpositive values.  This is not allowed, as we would then not be able to explicitly calculate terms of the sequence.
\item If $\fa{r}$ is constant, $B_r$ must equal our expression for $Q\p{m\kn+r}$.
%\item If $f_r$ is constant, $a_r$ must equal $0$.
\item If $\fa{r}$ is 
standard 
linear, $B_r$ must equal the constant term in our expression for $Q\p{m\kn+r}$.
%\item If $f_r$ is linear, $a_r$ must equal the coefficient on $k$ in our expression for $Q\p{mk+r}$.
%\item If $f_r$ is standard linear, $a_r$ must equal $m$.
%\item If $f_r$ is steep linear, we must have $a_r>m$, as we have previously used the fact that $f_r\p{k}>k+c$ eventually for every constant $c$.
%\item If $f_r$ is steep linear and in a cycle in the graph $G'$ in the algorithm, then the constant term in our expression for $Q\p{mk+r}$ must be greater than $m$ times the sum of offsets around the cycle.  (TODO something more clear here)
\item If $\fa{r}$ is steep linear, we may need a steepness constraint.  This constraint is somewhat more complicated; we describe it below.
\item Any constant that we have forced to have a certain congruence mod $m$ must actually have that congruence.
\item For any two constants of the form $Q\p{c}$ and $Q\p{d}$ that appear, if $c=d$ then $Q\p{c}$ must equal $Q\p{d}$.
\item If constant $Q\p{c}$ appears, then if $c\leq0$ we must have $Q\p{c}=0$.
\end{itemize}
The last two of these restrictions gives a set of conditional constraints to check; the rest of the constraints are unconditional constraints.

As mentioned above, constraining steep linear $\fa{r}$'s to actually be steep requires a more complicated constraint.  This stems from the fact that steep linear $\fa{r}$'s can arise in three different ways.
%all of which are best described in terms of the graphs $G'$ and $H$ and the relation $\sim$ in the positive recurrence system algorithm.
The steep linear $\fa{r}$'s are a subset of the linear $\fa{r}$'s.  In terms of the positive recurrence system algorithm, the linear $\fa{r}$'s are the ones whose vertices are labeled $1$ when the algorithm terminates.  The following are all the ways this could happen, in terms of the graphs $G'$ and $H$ in that algorithm.
\begin{enumerate}
\item\label{it:l1} The vertex $r$ could be labeled $1$ because the expression for $Q\p{m\kn+r}$ is a degree-$1$ polynomial.
\item\label{it:l2} The vertex $r$ could be labeled $1$ because it is not in a cycle in $G'$ and, when it came time to assign $\bar{r}$ its final label, the largest label in $H$ it pointed to was a $1$.
\item\label{it:l3} The vertex $r$ could be labeled $1$ because it is in a cycle in $G'$ and, when it came time to assign $\bar{r}$ its final label, the largest label in $H$ it pointed to was a $0$ (or it pointed to no other vertices in $H$).
\end{enumerate}
In Case~\ref{it:l1}, $\fa{r}$ is steep linear if and only if the leading coefficient of that polynomial is greater than $m$.  We already checked this in our structural consistency check, so if $\fa{r}$ is linear because of Case~\ref{it:l1}, we need no steepness constraint.  In Case~\ref{it:l2}, we have that $r$ is pointing to something else linear.  But, our unpacking step would have removed all references to standard linear $\fa{r'}$'s.  So, in Case~\ref{it:l2}, every $\fa{r'}$ that $\fa{r}$ still refers to must be steep linear.  This immediately forces $\fa{r}$ itself to be steep linear without imposing any extra constraints.

This leaves only Case~\ref{it:l3}.  In this case, $r$ is in a directed cycle in $G'$, say %\\$\fa{r_0},\fa{r_1},\ldots,\fa{r_{t-1}},\fa{r_t}$
\\$r=r_0,r_1,r_2,\ldots,r_{t-1},r_t=r$.
%, where $r_0=r_t=r$.  
%(For convenience, we also set $r_t=r$.)  
Each of the corresponding sequences has an expression of the form
\[
\fora{r_i}{\kn}=c_i+\fora{r_{i+1}}{\kn-e_i}.
\]
Repeated substitution yields the formula
\[
\fora{r}{\kn}=\sum_{i=0}^{t-1}c_i+\fora{r}{\kn-\sum_{i=0}^{t-1}e_i}
\]
for some constants $e_i$.  We require that $\fa{r}$ be steep.  This will be accomplished if we have that
\[
\sum_{i=0}^{t-1}c_i>m\sum_{i=0}^{t-1}e_i.
\]
So, this is the steepness constraint we add in Case~\ref{it:l3}.  In particular, we arrive at the same constraint for all functions in a given equivalence class.

In our example, we obtain the following constraints:
\begin{itemize}
\item $\fa{0}$ is superlinear, so there are no constraints associated to it.
\item $\fa{1}$ is standard linear, and we have $R\p{4\kn+1}=4\kn+B_1$.
This gives us the constraint $B_1=B_1$.  (This constraint is tautological, but this is okay.)
%This gives us the following constraints:
%\begin{itemize}
%\item $b_1=b_1$
%\item $a_1=4$
%\item $a_1=4$.
%\end{itemize}
%(One of these constraints is tautological, and the other two are redundant, but we do not care about this right now.)
\item $\fa{2}$ is constant, and we have $R\p{4\kn+2}=R\p{2-B_1}+B_3$.  This gives us the following constraints:
\begin{itemize}
%\item $a_2=0$
\item $B_2>0$
\item $B_2=R\p{2-B_1}+B_3$.
\end{itemize}
\item $\fa{3}$ is constant, and we have $R\p{4\kn+3}=B_3+R\p{3-B_1}$.  This gives us the following constraints:
\begin{itemize}
%\item $a_3=0$
\item $B_3>0$
\item $B_3=B_3+R\p{3-B_1}$.
\end{itemize}
\item Our congruence constraints are
\begin{itemize}
\item $B_2\equiv0\pb{\modd 4}$
\item $B_3\equiv3\pb{\modd 4}$.
\end{itemize}
\item Our conditional constraints are
\begin{itemize}
\item If $2-B_1=3-B_1$, then $R\p{2-B_1}$ must equal $R\p{3-B_1}$.
\item If $2-B_1\leq0$, then $R\p{2-B_1}=0$.
\item If $3-B_1\leq0$, then $R\p{3-B_1}=0$.
\end{itemize}
\end{itemize}
%%%%%%%%%%%%%%%%%
\subsection{Solving the Constraint Satisfaction Problem}
The recurrence solution we are seeking should exist if and only if this constraint system is satisfiable.  The system of unconditional constraints is almost an integer program.  The following modifications can turn it into an integer program:
\begin{itemize}
\item Since all variables are integers, strict inequalities of the form $x>y$ can be made loose by replacing them by the equivalent inequality $x\geq y+1$.
\item Congruence constraints can be converted to equality constraints via the introduction of auxiliary variables.  Namely, $x\equiv y\pb{\modd m}$ is the same constraint as $x=Km+y$, where $K$ is a new auxiliary variable.
\end{itemize}
Furthermore, the conditional constraints can be incorporated into the integer program.  For each constraint of the form $\pb{c=d}\Rightarrow \pb{e=f}$, we consider three cases.  (If one fails, we try the next one.)
\begin{itemize}
\item Add the constraints $c=d$ and $e=f$.
\item Add the constraint $c<=d-1$.
\item Add the constraint $c>=d+1$.
\end{itemize}
And, for each constraint of the form $\pb{c\leq d}\Rightarrow \pb{e=f}$, we consider two cases.
\begin{itemize}
\item Add the constraints $c\leq d$ and $e=f$.
\item Add the constraint $c>=d+1$.
\end{itemize}

Maple has a built-in procedure for satisfying \emph{linear} integer programs.  Since we are only considering linear nested recurrences,
%the only potential source of nonlinearity in the integer program is from steep linear terms.  (In particular, the step where we evaluated the previously-unevaluated steep linear function calls introduced terms of the form $a_rb_{r'}$.)  To mitigate this nonlinearity, this software tries to guess a value for each symbolic $a_r$ by examining what congruence constraints it inherits from other constraints.  Since this guess is purely heuristic, it is possible that the software misses some solutions at this stage.  But, no such misses have been observed during testing.
the program we obtain is, in fact, linear.
Integer linear programming is an \textsc{np}-hard problem~\cite{karp}.  But, experimentally, the instances that arise in this context seem not to be very hard.  Heuristically guessing $B_r$ values that are far apart seems to make satisfying the constraints be a quick process.  In particular, the constraints we obtain are typically satisfiable unless there is some obvious reason why they should not be.

The following assignments satisfy our example constraint system:
\begin{itemize}
%\item $a_1=4$
%\item $a_2=0$
%\item $a_3=0$
\item $B_1=0$
\item $B_2=4$
\item $B_3=3$
\item $R\p{2}=1\pb{=R\p{2-B_1}}$
\item $R\p{3}=0\pb{=R\p{3-B_1}}$.%TODO spacing
\end{itemize}
This means we have the following eventual solution:
\[
\begin{cases}
R\p{4\kn}=4\kn-3-1+0+\fora{0}{\kn-1}+R\p{4}=R\p{4\kn-4}+4\kn+R\p{4}-4\\
R\p{4\kn+1}=4\kn\\
R\p{4\kn+2}=4\\
R\p{4\kn+3}=3.
\end{cases}
\]
Note that these constraints have other satisfying values, and each other satisfaction leads to another eventual solution to the recurrence.
%%%%%%%%%%%%%%%%%
\subsection{Constructing an Initial Condition}
We now have an eventual solution to our recurrence.  But, it is quite likely that there will need to be some initial values to make the solution exist.  These initial value requirements can arise from a number of sources:
\begin{itemize}
\item An expression of the form $Q\p{\nii-c}$ in the recurrence probably requires that the initial condition have length at least $c$.  (The only case where it would not require this is if the solution we found has $Q\p{\nii-c}=0$ for $\nii\leq c$.)
\item There are sometimes equality constraints involving $Q\p{c}$ for some constant $c$.  Unless the value required of $Q\p{c}$ is what it must be anyway in the eventual solution, the initial condition must have length at least $c$.
\item In constructing our solution, we used the fact that steep sequences $\fa{r}$ are eventually greater than $\kn+c$ for any constant $c$.  But, for fixed $c$ they can be smaller for small $\kn$.  The initial condition must be long enough to allow all the steep sequences to become sufficiently large.
\end{itemize}

The question now is, when does the initial condition end?  We try to find as generic an initial condition as possible.  This means that we try to minimize the length of the initial condition while including as many free parameters as we can.  As a result, the initial condition we find may include symbols, and some of these symbols may have (obviously satisfiable) constraints placed on them.  (To find an explicit initial condition, replace each symbol by a value satisfying any constraints on it.)  Here is an outline of a procedure for finding an initial condition.  This is a more streamlined version of the procedure in the Maple package.  That version is more complicated, as it tries to use various tricks to shorten the initial condition.
\begin{enumerate}
\item Look for the largest $c_0$ such that $Q\p{c_0}$ appears in a constraint from the constraint satisfaction problem.  Initialize the list $L$ to the symbolic list $\bk{Q\p{1},Q\p{2},\ldots,Q\p{c_0}}$.%TODO spacing
%\item For each constraint including some $Q\p{c}$, solve for one of these terms, say $Q\p{c_0}$.  Substitute this value in position $c_0$ in $L$.
\item The constraints involving expressions of the form $Q\p{c}$ constitute a linear system.  Solve this system for as many constants $Q\p{c}$ as possible.  (If the system is underdetermined, these solutions may be in terms of others of these constants.)  Substitute these solution values into $L$ for their respective constants.
\item\label{st:nm} Denote by $\gamma$ the maximum number such that $Q\p{n-\gamma}$ appears in the recurrence.  Generate the next $\max\pb{m,\gamma}$ terms of the recurrence $Q$ with the initial condition $L$.  If $Q$ is ever evaluated at a symbolic index, assume that this index is nonpositive
and record the resulting constraint on the symbols.
(If the sequence dies during this computation, go to step~\ref{st:bad}.)
\item\label{st:4} If $\max\pb{m,\gamma}$ terms were generated, look at the terms in the steep positions and determine which future terms directly depend on them.  If the computations involving the steep terms used to compute the later terms all result in $0$, then the terms in the steep positions are sufficiently large.  (If these terms are symbolic, this may result in inequality constraints on some constants.)  Otherwise, the steep terms are not sufficiently large, so go to Step~\ref{st:bad}.
\item If the non-steep terms agree with their eventual values, then the initial condition is complete.  In this case, return $L$ along with the constraints that were imposed on the symbols when generating the $m$ terms.
\item\label{st:bad} If not enough terms were generated, terms were not large enough, or some term did not agree with its eventual value, then extend $L$ by one term.  In that position, put the value of the eventual solution.  (If that term is a member of a steep sequence, keep that term symbolic.)  Then, forget any constraints imposed on symbols in step~\ref{st:4} and return to Step~\ref{st:nm}.
\end{enumerate}
%TODO prove this works/terminates?
This algorithm generates an initial condition for the eventual solution found in the previous steps.

For our example, the largest $c$ such that $R\p{c}$ appears in a constraint is $c=3$.  So, we initialize $L=\bk{R\p{1},R\p{2},R\p{3}}$.  We see that $R\p{2}=1$ and $R\p{3}=0$, so this changes $L$ to $\bk{R\p{1},1,0}$.  In this case $m=4$, and $\gamma=3$ since $R\p{n-3}$ appears in the recurrence.  We now start extending $L$:
\begin{itemize}
\item We try to compute $R\p{4}$:
\begin{align*}
R\p{4}&=R\p{4-R\p{3}}+R\p{4-R\p{2}}+R\p{4-R\p{1}}\\
&=R\p{4-0}+R\p{4-1}+R\p{4-R\p{1}}\\
&=R\p{4}+R\p{3}+R\p{4-R\p{1}}.
\end{align*}
We see that $R\p{4}$ is defined in terms of $R\p{4}$, which is not allowed.  So, the sequence dies immediately.  We failed to generate $4$ terms, so we extend $L$.  The fourth term would be part of the quadratic (steep) sequence, so we keep $R\p{4}$ symbolic.   We now have $L=\bk{R\p{1},1,0, R\p{4}}$
\item The next two times we try to compute a term of $R$, the sequence will die immediately.  This is the case because the $0$ that killed our sequence in the $R\p{4}$ case is referenced again when computing $R\p{5}$ and when computing $R\p{6}$.  The predicted values for $R\p{5}$ and $R\p{6}$ are both $4$.  So, $L$ will be extended to $\bk{R\p{1},1,0, R\p{4}, 4}$ and then to $\bk{R\p{1},1,0, R\p{4}, 4, 4}$.
\item We try to compute $R\p{7}$:
\begin{align*}
R\p{7}&=R\p{7-R\p{6}}+R\p{7-R\p{5}}+R\p{7-R\p{4}}\\
&=R\p{7-4}+R\p{7-4}+R\p{7-R\p{4}}\\
&=R\p{3}+R\p{3}+R\p{7-R\p{4}}\\
&=0+0+R\p{7-R\p{4}}\\
&=R\p{7-R\p{4}}.
\end{align*}
In order to evaluate $R\p{7-R\p{4}}$, we assume that $R\p{4}\geq7$, so this term is zero.  This gives $R\p{7}=0$.  Then, when we try to compute $R\p{8}$, we will fail.  The predicted value for $R\p{7}$ is $3$, so we extend $L$ to $\bk{R\p{1},1,0, R\p{4}, 4, 4, 3}$.
\item We try to compute $R\p{8}$:
\begin{align*}
R\p{8}&=R\p{8-R\p{7}}+R\p{8-R\p{6}}+R\p{8-R\p{5}}\\
&=R\p{8-3}+R\p{8-4}+R\p{8-4}\\
&=R\p{5}+R\p{4}+R\p{4}\\
&=4+2R\p{4}.
\end{align*}
This is a symbolic answer, but that is permitted.  The term $R\p{8}$ would be in the quadratic sequence, so this agrees with the eventual solution provided $R\p{4}$ is sufficiently large.  We now try to compute $R\p{9}$:
\begin{align*}
R\p{9}&=R\p{9-R\p{8}}+R\p{9-R\p{7}}+R\p{9-R\p{6}}\\
&=R\p{9-4-2R\p{4}}+R\p{9-3}+R\p{9-4}\\
&=R\p{5-2R\p{4}}+R\p{6}+R\p{5}\\
&=R\p{5-2R\p{4}}+4+4\\
&=8+R\p{5-2R\p{4}}.
\end{align*}
If $2R\p{4}\geq5$, we obtain $R\p{9}=8$, which is its value in the eventual solution.  If we compute $R\p{10}$, we find that it equals its predicted value of $4$ provided that $2R\p{4}\geq6$.  Then, if we compute $R\p{11}$, we find that it equals its predicted value of $3$ provided that $2R\p{4}\geq7$.  Combining all of this, we find that, for all $R\p{1}$ and all $R\p{4}\geq4$, $\bk{R\p{1},1,0, R\p{4}, 4, 4, 3}$ is an initial condition that yields the eventual solution we found in the previous step.%TODO why does it continue?
\end{itemize}
%%%%%%%%%%%%%%%%%%%%%%%%%%%%%%
\section{Some Findings}\label{sec:find}
As our running example illustrates, it is possible to obtain quasi-quadratic solutions to Hofstadter-like recurrences.  This begs the question as to whether higher degree quasipolynomials are possible.  If they are, the algorithm will find them.  It turns out that there are quasipolynomial solutions to the Hofstadter $Q$-recurrence of every positive degree~\cite{gengol}.  In addition, there are solutions that include both quadratic and exponential subsequences, such as the sequence obtained from the Hofstadter $Q$-recurrence with $\bk{9,0,0,0,7,9,9,10,4,9,9,3}$ as the initial condition~\cite[A275153]{oeis}.  
It is likely that a construction similar to the one for arbitrary degree quasipolynomials~\cite{gengol} will also lead to examples including higher degree polynomials along with exponentials.  There are also solutions to the Hofstadter $Q$-recurrence with linear subsequences with slopes greater than $1$, and such subsequences can be obtained by any of the three ways mentioned in Subsection~\ref{subsec:csp}:%~\cite{oeis,A269328,2 new ones}.
\begin{itemize}
\item The length-$45$ initial condition
\begin{align*}
[&0, 4, -40, -9, 8, -8, 7, 1, 5, 13, -24, -1, 8, 8, 8, 1, 5, 13, -8, 7, 8, 8, 
23, 1, 5, 13, 8, 15, \\
&8, 16, 31, 1, 5, 13, 24, 23, 8, 24, 39, 1, 5, 13, 40, 31,
8]
\end{align*}
leads to a period-$8$ solution with $Q\p{8n+3}=16n-40$.  This is the case because unpacking $Q\p{8n+3}$ involves adding two standard linear terms together~\cite[A275361]{oeis}.
\item The length-$16$ initial condition
\[
[-9, 2, 9, 2, 0, 7, 9, 10, 3, 0, 2, 9, 2, 9, 9, 9]
\]
leads to a period-$9$ solution where $Q\p{9n+2}$ and $Q\p{9n+8}$ both have slope $10$.  But, $Q\p{9n+2}$ has slope $10$ because unpacking it yields $Q\p{9n-1}$ plus a constant~\cite[A275362]{oeis}.
\item In the previous example, $Q\p{9n+8}$ has slope $10$ because unpacking it results in $10+Q\p{9n-1}$.  This appears to be, by far, the most common way steep linear solutions arise.
\end{itemize}
Our algorithm was also used to examine what sorts of recurrences can be satisfied by the exponential subsequences.  This led to the observation that any homogeneous linear recurrence with positive coefficients that sum to at least $2$ can be realized~\cite{genrusk}.

%This algorithm was also used to find all positive recurrent solution families to the Hofstadter $Q$-recurrence with periods at most $6$.  A file containing information on all of these solution families can be found at \\\texttt{http://www.math.rutgers.edu/{$\sim$}nhf12/research/hof\_small\_periods.txt}.  What follows is a summary of these findings.

%Our algorithm was also used to explore positive recurrent solution families to the Hofstadter $Q$-recurrence with small periods.
We have used our algorithm to fully explore positive recurrent solution families to the Hofstadter $Q$-recurrence with small periods.
Given a solution to the Hofstadter $Q$-recurrence, any shift of this solution is also a solution, since the recurrence only depends on the relative indices of the terms (and not the absolute indices).  So, solution families found by the algorithm can be considered as-is or modulo this shifting operation.  There are no period-$1$ positive recurrent solutions to the Hofstadter $Q$-recurrence, and there are two families (one modulo shifting) of period~$2$ solutions.  These solutions consist of one constant sequence interleaved with one standard linear sequence.  For example, the initial condition $\bk{2,2}$ gives rise to the sequence $2,2,4,2,6,2,8,2,\ldots$~\cite[A275365]{oeis}.  There are $12$ period~$3$ solution families ($4$ modulo shifting).  One of these families includes Golomb's solution, and another includes Ruskey's sequence.  The other two families consist of eventually quasilinear solutions with two constant sequences and one standard linear sequence (and appear to be related to each other).  One of these families includes sequence A264756~\cite{oeis}.  There are $12$ period~$4$ families ($5$ modulo shifting), all of which are quasilinear with constant and standard linear sequences.  Some of these families include the period~$2$ solutions, but each such family also include additional solutions that do not have period~$2$.  There are $35$ period~$5$ families ($7$ modulo shifting). Again, all of these are quasilinear.  But, one of these families has a steep linear subsequence~\cite[A269328]{oeis}.  There is a lot more variety beginning at period~$6$.  There are $294$ solution families ($86$ modulo shifting) in this case.  These solutions include quadratics as well as mixing of exponentials with steep linears.  A file containing information on all of the solution families examined, modulo shifting, can be found at \texttt{http://www.math.rutgers.edu/{$\sim$}nhf12/research/hof\_small\_periods.txt}. 

In addition, we found positive recurrent solutions to other recurrences, including the Conolly recurrence~\cite[A275363]{oeis} and the Hofstadter-Conway recurrence~\cite[A052928]{oeis}.  This second case is notable because the solution has period~$2$ with both subsequences linear.  This can happen because the Hofstadter-Conway recurrence is not basic.  As a result, the congruence classes of the constant terms in the linear polynomials end up determining much of the behavior.
\section{Future Work}\label{sec:fut}
This algorithm has various positivity requirements in order to ensure that it runs correctly.  Nested recurrences are allowed to have negative terms; these would inherently violate our algorithm's conditions.  It would be useful to have a version of this algorithm that can handle these cases.  Also, we are not currently allowing implicit solutions.  Allowing these would give a method of handling sequences with infinitely many nonpositive entries, which may or may not be helpful.

In addition, the algorithm only works for linear recurrences.  But, it makes perfect sense to look for solutions to nonlinear nested recurrences that consist of simpler interleaved sequences.  Most of the steps of this algorithm still work when the recurrences can be nonlinear.  Whether that means that $Q$ terms are multiplied by each other or raised to powers, the only thing that changes significantly is the algorithm for determining the orders of growth of the subsequences.  Nonlinearity introduces some extra complications.  For example, the recurrence $C\p{n}=C\p{n-C\p{n-1}}^2-2C\p{n-C\p{n-1}}+2$ has constant solutions (constant $1$ and constant $2$), but it also has a solution
\[
\begin{cases}
C\p{2k}=2^{2^{k-1}}+1\\
C\p{2k+1}=2.
\end{cases}
\]
The distinction here comes from the fact that repeated squaring causes numbers to rapidly grow, unless the initial number was $0$ or $1$.  Such concerns do not arise when dealing with only linear recurrences.  Hence, it would be useful to have a version of the algorithm that can handle nonlinear recurrences.  Of course, we would have to modify what we are looking for, since the example here includes a doubly exponential subsequence, which cannot possibly be a component of a positive-recurrent sequence.
\appendix
\section{Appendix: Proof of Claim~\ref{cl:main}}\label{sec:pfclmain}
In this section, we will prove Claim~\ref{cl:main}:

\clmain*

%We will first prove the following lemma:
%\begin{lemma}\label{lem:incycle}
%Under the conditions of Claim~\ref{cl:main}, if there is a directed path in $G$ from $r$ to $r'$, then $\forab{r}{\kn}=\Omega\p{\forab{r'}{\kn}}$.
%\end{lemma}
%\begin{proof}
%The proof is by induction on the length of the shortest path from $r$ to $r'$.  If $r=r'$ (a path of length $0$), then the result is obvious.  Now, suppose the shortest path from $r$ to $r'$ in $G$ has length $t$ and that for any vertex $r''$ with a path shorter than $t$ to $r'$, $\forab{r''}{\kn}=\Omega\p{\forab{r'}{\kn}}$.  Suppose $r_1$ is the immediate successor of $r$ along a shortest path to $r'$.  We have
%\begin{align*}
%\forab{r}{\kn}&=P_r\p{\kn}+\sum\limits_{\ell=1}^{\kk}\sum\limits_{j=1}^m\alpha_{r,\ell,j} \forab{j}{\kn-\ell}
%\geq\sum\limits_{\ell=1}^{\kk}\alpha_{r,\ell,r_1}\forab{r_1}{\kn-\ell}.
%\end{align*}
%By induction, each $\forab{r_1}{\kn-\ell}=\Omega\p{\forab{r'}{\kn-\ell}}=\Omega\p{\forab{r'}{\kn}}$, so $\forab{r}{\kn}=\Omega\p{\forab{r'}{\kn}}$, as required.
%\end{proof}
%
%\begin{lemma}\label{lem:rgf}
%No component sequence of a positive recurrence system grows faster than exponentially.
%\end{lemma}
%\begin{proof}
%Given a positive recurrence system, we can compute the generating functions of the component sequences, and each one will be a rational function.  This means that each component sequence is eventually linear-recurrent, which means that each cannot grow faster than exponentially, as required.
%\end{proof}

%We can now prove Claim~\ref{cl:main}.
\begin{proof}
Since each component sequence has a rational generating function, no component sequence can possibly grow faster than exponentially.  So, proving only lower bounds when sequences should grow exponentially will suffice.  (We take advantage of this simplification throughout this proof.)

For each vertex $r$ in a directed graph $F$, define the \emph{potential} of $r$ (denoted $\phi_F\p{r}$) as the sum of the lengths of all directed cycles in $F$ containing $r$ (or $0$ if $r$ is not in any cycles).  Also, for each $r\in\bk{m}$, define $J\p{r}$ as the set of immediate successors of $r$ in $G'$.  We will prove Claim~\ref{cl:main} by induction on $\phi_G\p{r}$.  We will examine three cases (essentially a special case, a base case, and an inductive step).
\begin{description}
%%%%%%%%%%%%%%%%%
\item[$\phi_G\p{r}=0$:] In this case, $r$ is in no cycles in $G$, and $\bar{r}=\st{r}$.  Also, every successor of $r$ in $G$ is in $J\p{r}$, and every coefficient $\alpha_{r,\ell,j}$ with $j\notin J\p{r}$ is zero.
%We will proceed here by induction on $\psi\p{r}$.
%If $\psi\p{r}=0$, then $r$ has no outgoing edges.  In this case, $\forab{r}{\kn}=P_r\p{\kn}$.  Furthermore, $d_R=d_r=\deg\p{P_r}$.  So, $\forab{r}{\kn}=\Theta\p{\kn^{d_R}}$, as required.
%Now, suppose $\psi\p{r}>0$.  Inductively,
Inductively, suppose that any $j\in J\p{r}$ satisfies Claim~\ref{cl:main}.  (The base case of $J\p{r}=\emptyset$ is implicitly included here.)
%Let $D$ denote the max degree of any polynomial $P_j$ with $j\in J$.
%We have $d_R=\max\p{\deg\p{P_r},D}$.
%In this case every coefficient $\alpha_{r,\ell,j}$ with $j\notin J\p{r}$ is zero.  So,
This all gives
\[
\forab{r}{\kn}=P_r\p{\kn}+\sum\limits_{\ell=1}^{\kk}\sum\limits_{j\in J\p{r}}\alpha_{r,\ell,j} \forab{j}{\kn-\ell}.
\]
Let $D=\max\st{d_{\bar{j}}:j\in J\p{r}}$ (or $D=-\infty$ if this set is empty).  If $D=\infty$, then, by induction, some sequence $\forab{j}{\kn}$ with $j\in J\p{r}$ grows exponentially.  As a result, $\forab{r}{\kn}$ grows exponentially, and we have $d_{\bar{r}}=\infty$, as required.  On the other hand, if $D<\infty$, then we have $\forab{r}{\kn}=\Theta\pb{\kn^{\max\pb{D,\deg\p{P_r}}}}$.  Since $d_{\bar{r}}=\max\pb{D,\deg\p{P_r}}$ in this case, we have $\forab{r}{\kn}=\Theta\pb{\kn^{d_{\bar{r}}}}$, as required.
%%%%%%%%%%%%%%%%%
\item[$\phi_G\p{r}=1$:] In this case, the only cycle of $G$ that $r$ is in is a self-loop, and $\bar{r}=\st{r}$.  Also, every non-$r$ successor of $r$ in $G$ is in $J\p{r}$, and every coefficient $\alpha_{r,\ell,j}$ with $j\notin \st{r}\cup J\p{r}$ is zero.  Inductively, suppose that any $j\in J\p{r}$ satisfies Claim~\ref{cl:main}.  (The base case of $J\p{r}=\emptyset$ is implicitly included here.)  This all gives,
\[
\forab{r}{\kn}=P_r\p{\kn}+\sum\limits_{\ell=1}^{\kk}\alpha_{r,\ell,r}\forab{r}{\kn-\ell}+\sum\limits_{\ell=1}^{\kk}\sum\limits_{j\in J\p{r}}\alpha_{r,\ell,j} \forab{j}{\kn-\ell}.
\]
Let $D=\max\st{d_{\bar{j}}:j\in J\p{r}}$ (or $D=-\infty$ if this set is empty).  If $D=\infty$, then, by induction, some sequence $\forab{j}{\kn}$ with $j\in J\p{r}$ grows exponentially.  As a result, $\forab{r}{\kn}$ grows exponentially, and we have $d_{\bar{r}}=\infty$, as required.  On the other hand, if $D<\infty$, then we have
\[
\forab{r}{\kn}=\sum\limits_{\ell=1}^{\kk}\alpha_{r,\ell,r}\forab{r}{\kn-\ell}+\Theta\pb{\kn^{\max\pb{D,\deg\p{P_r}}}}.
\]
This is a nonhomogeneous linear recurrence for $\forab{r}{\kn}$ with characteristic polynomial
\[
P\p{x}=x^\kk-\sum\limits_{\ell=1}^{\kk}\alpha_{r,\ell,r}x^{\kk-\ell}
\]

If the weight on the self-loop on $r$ is $1$, then
\[
\sum\limits_{\ell=1}^{\kk}\alpha_{r,\ell,r}=1.
\]
So, $p\p{1}=0$, and we also note that this is a simple root, and the other nonzero roots of $p$ are all simple and roots of unity.  By the theory of linear recurrences, this results in a solution where $\forab{r}{\kn}=\Theta\pb{\kn^{1+\max\pb{D,\deg\p{P_r}}}}$.  Since $d_{\bar{r}}=1+\max\pb{D,\deg\p{P_r}}$ in this case, we have $\forab{r}{\kn}=\Theta\pb{\kn^{d_{\bar{r}}}}$, as required.

If the weight on the self-loop on $r$ is greater than $1$, then
\[
\sum\limits_{\ell=1}^{\kk}\alpha_{r,\ell,r}>1.
\]
So, $p\p{1}<0$, but $p\p{x}\to\infty$ as $x\to\infty$.  So, $p$ has a real root that is greater than $1$.  By the theory of linear recurrences, this results in a solution where $\forab{r}{\kn}$ grows exponentially.  And, $d_{\bar{r}}=\infty$ in this case, as required.
%%%%%%%%%%%%%%%%%
\item[$\phi_G\p{r}>1$:] Here $r$ is in some cycle of length at least $2$.  Consider such a cycle $C$.  Let $s$ denote the immediate successor of $r$, and let $t$ denote the immediate successor of $s$. We will construct a new positive recurrence system, and if we run our algorithm on this new system, we will obtain a graph in step~\ref{it:G}.  Call this graph $\tilde{G}$.  The system and $\tilde{G}$ will have the following properties:
\begin{itemize}
\item The system will have $m-1$ component sequences
\[
\forba{1}{\kn},\forba{2}{\kn},\ldots,\forba{s-2}{\kn},\forba{s-1}{\kn},\forba{s+1}{\kn},\forba{s+2}{\kn},\ldots,\forba{m}{\kn},
\]
one corresponding to each sequence $\forab{i}{\kn}$ with $i\neq s$.
\item For every $i\neq s$, if $\forba{i}{\kn}$ is given the same initial condition as $\forab{i}{\kn}$, then $\forba{i}{\kn}\leq\forab{i}{\kn}$.
\item If $s$ is in a unique cycle in $G$, then $\forba{r}{\kn}=\forab{r}{\kn}$.
\item We will have $\phi_{\tilde{G}}\p{r}<\phi_G\p{r}$.
\end{itemize}
Hence, if we inductively assume that Claim~\ref{cl:main} holds for any component sequence $r'$ of smaller potential in \emph{any} positive recurrence system, this construction will complete the proof.  (The properties are sufficient to show that  $\forab{r}{\kn}$ grows at least as fast as it should, and the equality case handles the sub-exponential component sequences.)

We have
\begin{align*}
\forab{s}{\kn}&=P_{s}\p{\kn}+\sum\limits_{\ell=1}^{\kk}\sum\limits_{j=1}^m\alpha_{s,\ell,j} \forab{j}{\kn-\ell}
\geq P_{s}\p{\kn}+\sum\limits_{\ell=1}^{\kk}\pb{\alpha_{s,\ell,t} \forab{t}{\kn-\ell}+\sum\limits_{j\in J\p{s}}\alpha_{s,\ell,j} \forab{j}{\kn-\ell}}.
\end{align*}
If $s$ is in a unique cycle in $G$, then every immediate successor of $s$ other than $t$ is in $J\p{s}$.  This means that all dropped terms in going from the equality to the inequality were in fact zero, so we actually have equality here in this case.

We also have
\begin{align*}
\forab{r}{\kn}&=P_{r}\p{\kn}+\sum\limits_{\ell=1}^{\kk}\sum\limits_{j=1}^m\alpha_{r,\ell,j} \forab{j}{\kn-\ell}
=P_{r}\p{\kn}+\sum\limits_{\ell=1}^{\kk}\pb{\alpha_{r,\ell,s} \forab{s}{\kn-\ell}+\sum\limits_{j\in\bk{m}\setminus\st{s}}\alpha_{r,\ell,j} \forab{j}{\kn-\ell}}.
\end{align*}
Let $J'=\bk{m}\setminus\p{\st{t}\cup J\p{s}}$.  Combining everything together yields
{\allowdisplaybreaks
\begin{align*}
\forab{r}{\kn}&\geq P_{r}\p{\kn}+\sum\limits_{\ell_1=1}^{\kk}\pb{\alpha_{r,\ell_1,s} \pb{P_{s}\p{\kn-\ell_1}+\sum\limits_{\ell_2=1}^{\kk}\pb{\alpha_{s,\ell_2,t} \forab{t}{\kn-\ell_1-\ell_2}+\sum\limits_{j\in J\p{s}}\alpha_{s,\ell_2,j} \forab{j}{\kn-\ell_1-\ell_2}}}\right.\\
&\hspace{0.62in}\left.+\sum\limits_{j\in\bk{m}\setminus\st{s}}\alpha_{r,\ell_1,j} \forab{j}{\kn-\ell_1}}\\
&=\pb{P_r\p{\kn}+\sum_{\ell=1}^{\kk}\alpha_{r,\ell,s}P_s\p{k-\ell}}+\sum_{\ell_1=1}^{\kk}\pb{\alpha_{r,\ell_1,t}\forab{t}{\kn-\ell_1}+\sum_{\ell_2=1}^{\kk}\alpha_{s,\ell_2,t} \forab{t}{\kn-\ell_1-\ell_2}}\\
&\hspace{0.62in}+\sum_{\ell_1=1}^{\kk}\sum_{j\in J\p{s}}\pb{\alpha_{r,\ell_1,j}\forab{j}{\kn-\ell_1}+\sum_{\ell_2=1}^{\kk}\alpha_{s,\ell_2,j} \forab{j}{\kn-\ell_1-\ell_2}}+\sum_{\ell=1}^{\kk}\sum_{j\in J'}\alpha_{r,\ell,j}\forab{j}{\kn-\ell}.
\end{align*}
Again, if $s$ is in a unique cycle in $G$, we actually have equality here.}

Our new positive recurrence system is obtained from the current one as follows:
\begin{itemize}
\item The equation for $\forba{i}{\kn}$ for $i\notin\st{r,s}$ is
\[
\forba{i}{\kn}=P_i\p{\kn}+\sum\limits_{\ell=1}^{\kk}\sum\limits_{j\in\bk{m}\setminus\st{s}}\alpha_{i,\ell,j} \forba{j}{\kn-\ell}.
\]
\item We have
\begin{align*}
\forba{r}{\kn}&=\pb{P_r\p{\kn}+\sum_{\ell=1}^{\kk}\alpha_{r,\ell,s}P_s\p{k-\ell}}+\sum_{\ell_1=1}^{\kk}\pb{\alpha_{r,\ell_1,t}\forba{t}{\kn-\ell_1}+\sum_{\ell_2=1}^{\kk}\alpha_{s,\ell_2,t} \forba{t}{\kn-\ell_1-\ell_2}}\\
&\hspace{0.62in}+\sum_{\ell_1=1}^{\kk}\sum_{j\in J\p{s}}\pb{\alpha_{r,\ell_1,j}\forba{j}{\kn-\ell_1}+\sum_{\ell_2=1}^{\kk}\alpha_{s,\ell_2,j} \forba{j}{\kn-\ell_1-\ell_2}}+\sum_{\ell=1}^{\kk}\sum_{j\in J'}\alpha_{r,\ell,j}\forba{j}{\kn-\ell}.
\end{align*}
\end{itemize}
In the definition of $\forba{r}{\kn}$, the first part is the nonhomogeneous part: a polynomial of degree $\max\pb{\deg\pb{P_r},\deg\pb{P_s}}$.  The second part describes the references to $\fb{t}$.  The third part describes the references to $\fb{j}$ where $j\in J\p{s}$.  The final part describes all the other references.  (Note that there are no references to the nonexistent sequence $\fb{s}$.)

From this construction and our calculation, we notice that the first three desired properties of this new system definitely hold.  We can also see how $\tilde{G}$ is obtained from $G$:
\begin{itemize}
\item First, delete all incoming arcs to $s$, other than the one from $r$.  (This is accomplished by the first bullet point above.)
\item Then, contract the arc $rs$.  (This is accomplished by the second bullet point above.)
\item If any multiple arcs were created in the previous step, replace them by a single arc whose weight is the sum of the weights of the arcs being replaced.  (This would come up, for example, if $r$ has a self-loop and is also in a $2$-cycle, with $s$ being the other vertex in that cycle.  After contracting $rs$, $r$ would have two self-loops, which then need to be combined.)
\end{itemize}
From this description, it is clear that every cycle in $G$ containing $r$ corresponds to a cycle in $\tilde{G}$ containing $r$, and the latter cycle can be no longer than the original.  The cycle corresponding to $C$ in $\tilde{G}$ is strictly shorter than $C$, since $s$ has been removed from it.  So, $\phi_{\tilde{G}}\p{r}<\phi_G\p{r}$, as required.
%The first step effectively bypasses the $r_{t-1}$ terms in the cycle, and, by our previous calculation, cannot increase the values of any sequences.  The second step also cannot increase the values of sequences, as it only removes terms in formulas (and all the terms are positive).  In the new system, there is a cycle corresponding to $r_0,r_1,r_2,\ldots,r_{t-3},r_{t-2},r_t$.  The weight of an arc $r_ir_{i+1}$ is the same as it was in our original cycle. The weight on arc $r_{t-2}r_t$ in the new graph is at least the product of the weights on arcs $r_{t-2}r_{t-1}$ and $r_{t-1}r_{t}$ in the old graph.  So, since our old cycle had an arc with weight greater than $1$, so does our new cycle, as required.
\end{description}
\end{proof}

\section*{Acknowledgements}
I would like to thank Dr. Doron Zeilberger of Rutgers University for introducing me to the work of Golomb and Ruskey and suggesting I try to generalize their work.  I would also like to thank Anthony Zaleski of Rutgers University for proofreading a draft of this paper and providing me with useful feedback.

\begin{bibdiv}
\begin{biblist}
\bib{hofv}{article}
{
  title={On the behavior of a variant of Hofstadter's Q-sequence},
  author={Balamohan, B.},
  author = {Kuznetsov, A.},
  author = {Tanny, Stephen},
  journal={J. Integer Sequences},
  volume={10},
  pages={29},
  year={2007}
}
\bib{con}{misc}
{
author={Conolly, B.W.},
title={Meta-Fibonacci sequences, Chapter XII in S. Vajda, Fibonacci \& Lucas Numbers, and the Golden Section},
year={1989},
publisher={Ellis Horwood Limited}
}
\bib{erickson}{article}
{
  title={Nested recurrence relations with Conolly-like solutions},
  author={Erickson, Alejandro}
  author={Isgur, Abraham}
  author = {Jackson, Bradley W.}
  author = {Ruskey, Frank}
  author = {Tanny, Stephen M.},
  journal={SIAM Journal on Discrete Mathematics},
  volume={26},
  number={1},
  pages={206--238},
  year={2012},
  publisher={SIAM}
}
\bib{genrusk}{article}
{
  title={Linear recurrent subsequences of generalized meta-Fibonacci sequences},
  author={Fox, Nathan},
  journal={Journal of Difference Equations and Applications},
  pages={1--8},
  year={2016},
  publisher={Taylor \& Francis}
}
\bib{gengol}{article}
{
 title={Quasipolynomial Solutions to the Hofstadter Q-Recurrence},
 author={Fox, Nathan},
 journal={arXiv preprint arXiv:1511.06484},
 year={2015}
}
\bib{golomb}{misc}
{
author={Golomb, S.W.},
title={Discrete Chaos: Sequences Satisfying \quot{Strange} Recursions},
year={1991},
publisher={unpublished manuscript}
}
\bib{geb}{book}
{
 author = {Hofstadter, Douglas},
 title = {G\"odel, Escher, Bach: an Eternal Golden Braid}, 
 publisher = {Penguin Books},
 year = {1979}, 
 pages = {137}
}
\bib{isgur1}{article}
{
  title={Constructing New Families of Nested Recursions with Slow Solutions},
  author={Isgur, A.}
  author ={Lech, R.}
  author = {Moore, S.}
  author = {Tanny, S.}
  author = {Verberne, Y.}
  author = {Zhang, Y.},
  journal={SIAM Journal on Discrete Mathematics},
  volume={30},
  number={2},
  pages={1128--1147},
  year={2016},
  publisher={SIAM}
}
\bib{isgur2}{article}
{
  title={Trees and meta-Fibonacci sequences},
  author={Isgur, Abraham}
  author={Reiss, David}
  author = {Tanny, Stephen},
  journal={The Electronic Journal of Combinatorics},
  volume={16},
  number={R129},
  pages={1},
  year={2009}
}
\bib{karp}{article}
{
  title={Reducibility among combinatorial problems},
  author={Karp, Richard M.},
  booktitle={Complexity of computer computations},
  pages={85--103},
  year={1972},
  publisher={Springer}
}
\bib{mallows}{article}
{
  title={Conway's challenge sequence},
  author={Mallows, Colin L},
  journal={American Mathematical Monthly},
  volume={98},
  number={1},
  pages={5--20},
  year={1991},
  publisher={Mathematical Association of America}
}
\bib{oeis}{misc}
{
 title = {OEIS Foundation Inc.},
 year = {2016}
 publisher = {The On-Line Encyclopedia of Integer Sequences}
 note = {http://oeis.org/}
}
\bib{rusk}{article}
{
author = {Ruskey, F.}
title = {Fibonacci Meets Hofstadter},
journal={The Fibonacci Quarterly},
volume= {49},
year = {2011},
number = {3},
pages = {227-230}
}
\bib{tanny}{article}
{
title={A well-behaved cousin of the Hofstadter sequence},
  author={Tanny, Stephen M},
  journal={Discrete Mathematics},
  volume={105},
  number={1},
  pages={227--239},
  year={1992},
  publisher={Elsevier}
}
\end{biblist}
\end{bibdiv}
\end{document}